\title[A closed algebra with a non-Borel clone]{A closed algebra with a non-Borel clone and\\ an ideal with a Borel clone}
\keywords{clone; Borel set; complete analytic; closed set; ideal; upper density}
\date{\today}
\subjclass[2010]{Primary 08A40; secondary 54H15; 22A30; 03E15}
\author[M.~Goldstern]{Martin Goldstern}
\address{Algebra\\TU Wien\\Wiedner Hauptstra\ss e 8-10/104\\A-1040 Wien, Austria}
\email{goldstern@tuwien.ac.at}\urladdr{http://www.tuwien.ac.at/goldstern/}
 \thanks{Research of the first author supported by FWF project P 21968-N13.}
\author[M.~Pinsker]{Michael Pinsker}
    \address{\'{E}quipe de Logique Mathématique\\ Universit\'{e} Denis Diderot -- Paris 7\\
	UFR de Math\'{e}matiques\\
	75205 Paris Cedex 13, France}
    \email{marula@gmx.at}
    \urladdr{http://dmg.tuwien.ac.at/pinsker/}
    \thanks{Research of the second author supported by an APART-fellowship of the Austrian Academy of Sciences.}
\author[S.~Shelah]{Saharon Shelah}
    \address{Institute of Mathematics\\ The Hebrew University of Jerusalem\\91904 Jerusalem, Israel, and Department of Mathematics, Rutgers University, New Brunswick, New Jersey 08854}
    \email{shelah@math.huji.ac.il}
    \urladdr{http://shelah.logic.at}
\thanks{Research of the third author partially supported by NSF grant no: DMS 1101597, and
GIF (German-Israeli Foundation for Scientific Research \& Development) Grant no. 963-98.6/2007. Publication 994 on Shelah's list.}
\theoremstyle{plain}
    \newtheorem{thm}{Theorem}
    \newtheorem{lem}[thm]{Lemma}
    \newtheorem{fact}[thm]{Fact}
\theoremstyle{definition}
    \newtheorem{defn}[thm]{Definition}
\newcommand{\on}{{\upharpoonright}}
\newcommand{\nin}{\notin}
\DeclareMathOperator{\id}{id}
\newcommand{\inv}{^{-1}}
\newcommand{\un}{^{(n)}}
\newcommand{\uk}{^{(k)}}
\newcommand{\ignore}[1]{}
\newcommand{\cl}[1]{\langle #1 \rangle}
\renewcommand{\O}{{\mathscr O}}
\newcommand{\On}{{\mathscr O}^{(n)}}
\newcommand{\Ok}{{\mathscr O}^{(k)}}
\newcommand{\Oo}{{\mathscr O}^{(1)}}
\newcommand{\Ot}{{\mathscr O}^{(2)}}
\DeclareMathOperator{\pol}{Pol}
\newcommand{\C}{{\mathscr C}}
\newcommand{\F}{{\mathscr F}}
\newcommand{\To}{\rightarrow}
\newcommand{\A}{{\mathfrak A}}
\newcommand{\B}{{\mathscr B}}
\newcommand{\G}{{\mathscr G}}
\newcommand{\pfeil}[1]{\ \mathop{\longrightarrow}\limits^{#1}\ } 
\newcommand{\NN}{{\mathbb N}}
\newcommand{\CC}{\C_{I_{\bar d=0}}}
\newcommand{\OO}{{\mathscr O}}
\theoremstyle{remark}
\begin{document}

\begin{abstract}
	Algebras on the natural numbers and their clones of term operations can be classified according to their descriptive complexity. We give an example of a closed algebra which has only unary operations and whose clone of term operations is not Borel. Moreover, we provide an example of a coatom in the clone lattice whose  obvious definition via an ideal of subsets of natural numbers would suggest that it is complete coanalytic, but which turns out to be a rather simple Borel set. Our results solve Problems~E and~N from~\cite{GoldsternPinsker} and Problem~40 from~\cite{BGHP}.

\end{abstract}

\maketitle

\section{Two problems about clones on $\NN$}

\subsection{Descriptive set theory of algebras and clones on $\NN$}\label{subsect:1}

Let $X$ be a set, and denote for all $n\geq 1$ the set $X^{X^n}$ of all functions on $X$ in $n$ variables  by $\On$. Then $\O:=\bigcup_{n\geq 1} \On$ is the set of all finitary functions on $X$. A \emph{clone} is a subset $\C$ of $\O$ which contains all projections (i.e., all functions satisfying an equation of the form $f(x_1,\ldots,x_n)=x_k$) and which is closed under composition, i.e., for all $n,m\geq 1$, all $n$-ary $f\in \C$, and all $m$-ary $g_1,\ldots,g_n\in\C$, the $m$-ary function $f(g_1(x_1,\ldots,x_m),\ldots,g_n(x_1,\ldots,x_m))$ is also an element of $\C$. In other words, $\C$ is required to be closed under building of \emph{terms} from its functions. The latter perspective shows that clones arise naturally as sets of term functions of algebras with domain $X$; in fact, the clones on $X$ are \emph{precisely} the sets of term functions of such algebras. Since many properties of an algebra (e.g., subalgebras, congruences) depend only on the clone of the algebra, clones are in that sense canonical representatives of algebras, and have been studied intensively in the literature; for a monograph on clones, see~\cite{Szendrei}.

While clones arise in this way on base sets $X$ of arbitrary (finite or infinite) cardinality, there is an additional perspective on clones from the viewpoint of descriptive set theory that can only be enjoyed on a countably infinite base set, as we will outline in the following. For notational and conceptual convenience, let us identify $X$ with the set of natural numbers $\NN$. 
Then, for every fixed $n\geq 1$, we can view $\On=\NN^{\NN^n}$ as a topological space whose topology is naturally given by equipping $\NN$ with the discrete topology and viewing $\NN^{\NN^n}$ as a product space. This space is homeomorphic to the \emph{Baire space} (the metric space on $\NN^\NN$ in which two functions are closer the later they start to differ), and a function $f\in\On$ is in the closure of a set $\F\subseteq \On$ iff for every finite subset of $\NN^n$ there exists $g\in\F$ which agrees with $f$ on this set. The set $\O$ then becomes the sum space of the spaces $\On$, i.e., the open subsets of $\O$ are those sets $\F$ for which the $n$-ary fragment $\F\un:=\F\cap \On$ is  open in $\On$ for every $n\geq 1$. This space is itself homeomorphic to the Baire space, and is in particular a \emph{Polish space}, i.e., a separable topological space whose topology is generated by a complete metric (confer the textbook~\cite{Kechris}).  

The latter fact allows for the use of the notions of descriptive set theory on $\O$. A subset of a Polish space is called \emph{analytic}
 iff it is the continuous image of a closed subset of the Baire space.  For example, all Borel sets are analytic. The \emph{coanalytic}
 subsets of a Polish space are defined to be the complements of analytic sets. A coanalytic set $Y$ in a Polish space $S$ is called \emph{complete
coanalytic} iff for every Polish space $S'$ and every coanalytic
set $Y'$ therein, $Y'$ is the preimage of $Y$ under some
continuous function from $S'$ to $S$. In every uncountable Polish space, in particular in $\O$, there are analytic sets which are not coanalytic. A complete coanalytic set can
therefore not be analytic.

A central theme of descriptive set theory is the  investigation of
the complexity of subsets of Polish spaces. In this descriptive complexity hierarchy,  the simplest sets are the
closed and the open sets. Borel sets are still 
considered relatively simple, and in fact, most sets of real numbers that appear in
analysis are Borel sets.  Analytic sets are more complicated
than Borel sets (similar to the difference between recursively
enumerable and recursive sets), and coanalytic sets are considered
to be slightly more complicated.

As subsets of the Polish space $\O$, sets of finitary functions on $\NN$ can thus be classified according to this descriptive complexity. In particular, this applies to the functions of an algebra and to clones; we then call algebras and clones open, closed, Borel, analytic, etc. The clones that arise most naturally are at the lowest level of the descriptive hierarchy: the \emph{closed clones} are precisely the \emph{polymorphism clones} of relational structures with domain $\NN$, i.e., the sets of finitary functions preserving all relations of some relational structure. Similarly to automorphism groups, polymorphism clones contain information about their corresponding relational structure, and are investigated in order to derive properties of this structure; we refer to~\cite{BodChenPinsker,BP-reductsRamsey, BodPin-Schaefer-STOC} for applications of closed clones in model theory and theoretical computer science. 

Much further up in complexity, the first author of the present paper proved that a certain important clone containing $\Oo$, called $\pol(T_2)$, is complete coanalytic in order to show that this clone could not be the term clone of any algebra which has only countably many non-unary functions~\cite{Goldstern-analytic}: for any algebra which has all functions in $\Oo$ and countably many non-unary functions is Borel, and the clone of term operations of a Borel algebra is always analytic. So far, no algebraic proof of the above result is known, and thus descriptive set theory is not only a way of classifying clones on $\NN$, but also a tool for proving theorems about such clones.

\begin{fact} If an algebra is Borel or analytic, then its clone of term operations 
is analytic.
\end{fact}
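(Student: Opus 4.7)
The plan is to describe the clone of term operations $\Clo(F)$ generated by the set $F \subseteq \O$ of basic operations of the algebra as a countable union of continuous images of analytic sets, exploiting the recursive structure of term formation.

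The first ingredient is the observation that composition is continuous: for each $n, m \geq 1$, the map
\[
c_{n,m} \colon \On \times (\O\um)^n \to \O\um, \quad (f, g_1, \ldots, g_n) \mapsto f \circ (g_1, \ldots, g_n)
\]
is continuous. This is immediate from the product topology on the function spaces, since the value of the composed function at any fixed argument $\bar x \in \NN^m$ depends only on finitely many values of $f$ and of the $g_i$.

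Next, writing $P \subseteq \O$ for the countable (hence analytic) set of all projections, I would define by recursion on term depth
\[
C_0 := F \cup P, \qquad C_{k+1} := C_k \cup \bigcup_{n, m \geq 1} c_{n,m}\bigl((C_k \cap \On) \times (C_k \cap \O\um)^n\bigr),
\]
and observe that $\Clo(F) = \bigcup_{k \in \NN} C_k$, because every term function is obtained from $F$ and the projections by finitely many nested compositions. A straightforward induction then shows that each $C_k$ is analytic: the induction step combines closure of the class of analytic subsets of a Polish space under finite products, continuous images, and countable unions. Therefore $\Clo(F)$ is itself analytic, being a countable union of analytic sets; and the Borel case is subsumed, since every Borel set in a Polish space is analytic.

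I do not expect a genuine obstacle in this argument, which is essentially a piece of descriptive-set-theoretic bookkeeping of term formation. The only points worth explicitly verifying are the continuity of composition and the fact that the recursion on term depth exhausts $\Clo(F)$, both of which are straightforward.
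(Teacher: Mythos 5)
Your proof is correct. The paper states this Fact without proof (it is essentially folklore, used as a black box drawn from~\cite{Goldstern-analytic}), so there is no argument in the paper to compare against; your recursion on term depth, together with continuity of the composition maps and closure of analytic sets under finite products, continuous images, and countable unions, is exactly the standard bookkeeping proof one would expect. The one point worth making explicit when writing this out is the verification that $\bigcup_k C_k$ is actually a clone (so that it not merely sits inside $\Clo(F)$ but equals it): given $f\in C_j\cap\On$ and $g_1,\dots,g_n\in C_{j'}\cap\O\um$, taking $k=\max(j,j')$ shows the composite lies in $C_{k+1}$, which, combined with $C_0\supseteq F\cup P$, gives the reverse inclusion. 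With that remark added, the argument is complete.
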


The mentioned exploitation of this bound on the increase in descriptive complexity when passing from an algebra to its clone of term operations inspired the authors of the survey paper~\cite{GoldsternPinsker} to ask whether there exists a Borel algebra whose term clone is not Borel; the problem was stated as Problem~N in the survey. Here, we will show that there exists an algebra which

\begin{itemize}
	\item has contains only unary functions,
	\item is closed, and
	\item has a term clone which is not Borel,
\end{itemize}
providing an affirmative answer to this problem.

\subsection{A false coanalytic ideal clone on $\NN$}\label{subsect:2}

A large class of clones on $\NN$ are \emph{ideal clones} (the class provides, in particular, $2^{2^{\aleph_0}}$ coatoms in the lattice of clones without use of the Axiom of Choice~\cite{BGHP}). Let $I$ be an ideal of subsets of $\NN$, that is, a downward closed set of subsets which is closed under finite joins. Then the set $\C_I$ of all finitary functions on $\NN$ which send powers of sets in $I$ to sets in $I$ is a clone; from their definition which universally quantifies over all subsets of the natural numbers, clones of this form can be expected to be rather up in the descriptive hierarchy. One natural ideal on $\NN$ is the following. For each set $A \subseteq \NN$, the upper density 
$\bar d(A)$ is defined as $$  \bar d(A):= \limsup\limits_{n\to
\infty} \frac{|A\cap
[0,n) |}{n}. $$ 
The family of sets of upper density 0 forms an ideal $I_{\bar d=0}$. The corresponding clone $\C_{I_{\bar d=0}}$ was studied by the authors of~\cite{BGHP}, who at the time could not determine whether or not the clone was \emph{precomplete}, i.e.,  a coatom in the lattice of all clones on $\NN$. Moreover, the second author of that paper conjectured that $\C_{I_{\bar d=0}}$ is, just like the clone $\pol(T_2)$ mentioned above, complete coanalytic, in accordance with its obvious definition quantifying over subsets of $\NN$; this would have explained the difficulties when trying to decide whether or not the clone is a coatom. We disprove this conjecture by showing that 
$\C_{I_{\bar d=0}}$ is in fact a Borel set of low complexity. Moreover, we show that $\C_{I_{\bar d=0}}$ is indeed precomplete, solving Problem~40 of~\cite{BGHP} (also known as Problem~E in
 in the survey paper~\cite{GoldsternPinsker}).
 
 \subsection{Summary and organization of the paper}

 We thus provide in this paper an example of a rather complex (non-Borel) clone which comes from an algebra that is rather simple (closed, and moreover unary) (Section~\ref{sect:2}), and an example of a rather simple (low Borel) clone which seems to be complex in the sense that its obvious definition by means of a non-trivial ideal does not suggest it is Borel, and in the sense that determining its precompleteness is a relatively hard task (Section~\ref{sect:3}). Our results solve Problems~N and~E from~\cite{GoldsternPinsker}; the latter problem has been stated as Problem~40 in~\cite{BGHP}.

\section{A closed algebra with a non-Borel clone}\label{sect:2}

\begin{thm}
	There exists a closed algebra on $\NN$ whose clone of term operations is not Borel. Moreover, this algebra can be chosen to contain only unary functions.
\end{thm}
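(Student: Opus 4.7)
The plan is to build a closed set $\F \subseteq \Oo$ of unary functions on $\NN$ and take the desired algebra $\A$ to have the members of $\F$ as its basic operations. Because $\A$ is unary, its clone of term operations $\Clo(\A)$ consists of all projections together with those functions that depend on a single variable and whose associated unary reduct lies in the submonoid $M := \langle \F \rangle$ of $\Oo$ generated by $\F$ under composition. The set of projections is closed in $\O$ and, for each $n$ and each $1 \leq i \leq n$, the set of $n$-ary functions depending only on the $i$th variable is closed as well. Hence $\Clo(\A)$ is Borel precisely when $M$ is Borel in $\Oo$, so the problem reduces to constructing a closed $\F \subseteq \Oo$ whose generated monoid is not Borel.

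The monoid $M$ is automatically an analytic subset of $\Oo$, being the image of the closed set $\bigsqcup_{k \geq 1} \F^k$ under the continuous composition map $(f_1, \ldots, f_k) \mapsto f_k \circ \cdots \circ f_1$. The natural way to exclude $M$ from the Borel class is therefore to exhibit a continuous reduction of a standard complete analytic set to $M$; I would take as source the set of ill-founded trees inside $2^{\NN^{<\omega}}$ and, to each tree $T$, assign continuously a function $g_T \in \Oo$ such that $g_T \in M$ if and only if $T$ has an infinite branch.

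The construction of $\F$ is the heart of the matter. Identifying $\NN$ with a convenient countable set such as $\NN^{<\omega}$, or a two-dimensional tape indexed by pairs $(s,i)$, I would design the generators of $\F$ as elementary moves: a typical member of $\F$ extends the current sequence by appending a single symbol while simultaneously recording the chosen symbol on a read-only track. Finite compositions of such moves then trace finite initial segments of branches through $\NN^{<\omega}$, and by arranging enough bookkeeping one can ensure that a function $g$ lies in $M$ exactly when $g$ encodes branch information for some path through the given tree. Continuity of the reduction $T \mapsto g_T$ is then built directly into the coding.

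The main obstacle is the tension between closedness of $\F$ and the narrowness demanded of $M$. Closing $\F$ under limits in $\Oo$ forces extra limit functions into the algebra, and these new generators threaten to create spurious compositions that enlarge $M$ beyond the intended analytic set. The technical heart of the proof therefore lies in choosing the moves and the encoding rigidly enough that every limit of generators can be normalized inside the composition calculus, while no tree-unrelated function sneaks into $M$. Once this rigidity is established, the reduction from the set of ill-founded trees completes the argument; the ``moreover'' clause is automatic since $\F$ consists of unary functions by construction.
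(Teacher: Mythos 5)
Your high-level reduction is sound: for a unary algebra $\A$ with basic operations $\F\subseteq\Oo$, the clone $\Clo(\A)$ is Borel iff the generated monoid $M=\langle\F\rangle\subseteq\Oo$ is Borel, and $M$ is automatically analytic. So the problem does reduce to exhibiting a \emph{closed} $\F\subseteq\Oo$ whose generated monoid is not Borel. Up to that point you agree with the paper.

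But the proposal has a genuine gap exactly where you flag it yourself: you observe that ``closing $\F$ under limits \dots\ threaten[s] to create spurious compositions,'' and then defer the resolution to ``rigidity'' of an unspecified tree-encoding. No construction is actually given, and the encoding route is substantially harder than necessary: you aim to realize a \emph{complete} analytic set via branches of ill-founded trees, which requires controlling arbitrarily long compositions of generators, precisely the thing your ``rigidity'' concern says is delicate. As written the argument is a plan, not a proof.

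The paper circumvents the whole difficulty by making the composition structure of the generating set trivial. Split $\NN$ into $\{0\}\cup T_x\cup T_y\cup A_x\cup A_y$, choose a \emph{closed} $B\subseteq T_x^{A_x}\times T_y^{A_y}$ whose projection to the first coordinate is analytic but not Borel (such $B$ exists since every analytic set is the projection of a closed set), and let $\F$ be the identity together with all $f\in\Oo$ that fix $\{0\}\cup T_x\cup T_y$ pointwise and satisfy $(f\on_{A_x},f\on_{A_y})\in B$. Every non-identity $f'\in\F$ has range inside $\{0\}\cup T_x\cup T_y$, on which every $f\in\F$ is the identity, so $f\circ f'=f'$: $\F$ is already a closed monoid and generates nothing new. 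Adding one more unary function $h$ (identity off $A_y$, constantly $0$ on $A_y$) produces, via $f\circ h$, exactly the functions whose $A_x$-restriction lies in the non-Borel projection of $B$ — so $\langle\F\cup\{h\}\rangle$ is not Borel, and one never has to analyze long or wild compositions. In short: rather than fighting to keep limits of generators under control, the paper chooses generators that are closed and composition-inert from the start and uses a single auxiliary function to ``project,'' which is a cleaner path than the tree-coding you sketch and is the missing construction in your proposal.
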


\begin{proof}
Write $\NN$ as a disjoint union $\{0\}\cup T_x \cup T_y\cup A_x\cup A_y$, where $T_x$, $T_y$, $A_x$ and $A_y$ are infinite. Consider the space $T_x^{A_x}$ of all functions from $A_x$ to $T_x$, equipped with the metric that makes it the Baire space, and consider likewise $T_y^{A_y}$. Then there exists a closed subset $B$ of the product space $T_x^{A_x}\times T_y^{A_y}$ whose projection onto the first coordinate is analytic but not Borel; see for example the textbook~\cite{Kechris}.

Let $\F$ contain the identity function $\id$ on $\NN$ plus the set of all functions $f$ in $\NN^\NN$ such that:
\begin{itemize}
	\item $f$ is the identity on $\{0\}\cup T_x\cup T_y$, and
	\item the pair $(f\on_{A_x}, f\on_{A_y})$ is an element of $B$.
\end{itemize}

Then $\F$ is a transformation monoid since $f(f'(x))=f'(x)$ for all $f, f'\in \F$ such that $f'$ is not the identity function. Moreover, it is clearly a closed subset of $\Oo$ since the set $B$ is closed.

Now let $h:\NN\To\NN$ defined by
$$
	h(n):=\begin{cases} n,& n\in \{0\}\cup T_x\cup T_y\cup A_x\\
	0,& n\in A_y.
	\end{cases}
$$

Set $\G$ to contain all unary functions that can be composed from elements of the set $\{h\}\cup\F$. Then $\G$ is the disjoint union of $\{h\}$, $\F$, and the set $\G'$ of all functions $g$ in $\Oo$ such that
\begin{itemize}
	\item $g$ is the identity on $\{0\}\cup T_x\cup T_y$, and
	\item $g\on {A_x}$ is in the projection of $B$ onto the first coordinate, and
	\item $g(n)=0$ for all $n\in A_y$.
\end{itemize}

To see this, observe that $h\circ f=f$ for all $f\in \F$, and $f\circ h$ is the element $g$ of $\G'$ which agrees with $f$ on $A_x$.

Since the projection of $B$ onto the first coordinate is not Borel, $\G'$ is not Borel. Hence $\G$, as  the disjoint union of $\G'$ with the closed set $\{h\}\cup\F$, is not Borel either. Therefore, taking $\{h\}\cup\F$ as the functions of the algebra proves the theorem.
\end{proof}

\section{The clone preserving zero upper density}\label{sect:3}

We now investigate the clone $\C_{I_{\bar d=0}}$ of all functions which preserve the ideal of sets of upper density $0$. We first show that it is Borel, and then that it is precomplete.

\subsection{The complexity of $\C_{I_{\bar d=0}}$}

In this part we give the prove of the following theorem.

\begin{thm}\label{thm:Borel}
	$\C_{I_{\bar d=0}}$  is Borel.
\end{thm}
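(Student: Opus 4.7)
The plan is to derive the $\Pi^0_3$ complexity of $\C_{I_{\bar d=0}}$ by characterizing its complement via finite witnesses. Concretely, for $f \in \On$, I claim the equivalence
\begin{align*}
f \notin \C_{I_{\bar d=0}} \iff &\exists \epsilon \in \Q \cap (0,1] \, \forall K \in \NN \, \exists \text{ finite } F \subseteq \NN, \, \exists N \geq K \text{ with} \\
& K \cdot |F| \leq \max F + 1 \text{ and } |f(F^n) \cap [0, N)| \geq \epsilon N.
\end{align*}
Since the final clause depends on $f$ only through its values on the finite set $F^n$, it is clopen in $\On$, so the right-hand side has the form $\exists\forall\exists$(clopen), that is, $\Sigma^0_3$. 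Hence $\C_{I_{\bar d=0}}$ is $\Pi^0_3$, which is well within Borel.

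The forward direction $(\Rightarrow)$ is essentially unwinding the definitions. Given a witnessing sparse $A$ with $\bar d(f(A^n)) \geq \epsilon$, for each $K$ one picks $N \geq K$ with $|f(A^n) \cap [0, N)| \geq \epsilon N$, chooses one preimage $\vec a_m \in A^n$ for each $m \in f(A^n) \cap [0, N)$, and sets $F := A \cap [0, L)$, where $L - 1 \in A$ is chosen large enough both to exceed every coordinate of every $\vec a_m$ and to ensure $|A \cap [0, L)| \leq L/K$. This last requirement is achievable because $A$ is infinite and of zero upper density.

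The backward direction $(\Leftarrow)$ is the substance of the proof and consists of a staged gluing construction. Inductively, at stage $k$, having $A_{k-1} := F_1 \cup \cdots \cup F_{k-1}$ already built, one applies the finite-witness condition with parameter
\[
K_k := \max(k+1, \ \max A_{k-1} + 2, \ k \cdot |A_{k-1}| + 1)
\]
to obtain $F_k$ and $N_k$. The choice of $K_k$ forces $\max F_k \geq K_k - 1 > \max A_{k-1}$, so $F_k$ pushes $A$ past the previous stage, and a direct computation using $K_k |F_k| \leq \max F_k + 1$ shows $|A_k|/\max F_k = O(1/k)$ with $A_k := A_{k-1} \cup F_k$. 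Setting $A := \bigcup_k F_k$ one obtains $\bar d(A) = 0$, while $f(A^n) \supseteq f(F_k^n)$ combined with $N_k \geq K_k \to \infty$ yields $\bar d(f(A^n)) \geq \epsilon$.

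The main obstacle will be the correct formulation of the density condition on $F$. A naive version requiring $F \subseteq [K, \infty)$ would fail, since there are $f \notin \C_{I_{\bar d=0}}$ whose bad behavior necessarily involves a bounded set of small coordinates; for example, the arity-$2$ function defined by $f(0, y) := \lfloor \log_2 y \rfloor$ for $y \geq 1$ and $f(x, y) := 0$ otherwise is not in $\C_{I_{\bar d=0}}$, witnessed by the sparse set $A := \{0\} \cup \{2^k : k \geq 1\}$, yet admits no finite witness with $F \subseteq [K, \infty)$ for any $K \geq 1$. The condition $K \cdot |F| \leq \max F + 1$ permits $F$ to include such fixed small elements, while still allowing the glued union $\bigcup_k F_k$ to be sparse: shared small elements contribute only a bounded set to $A$.
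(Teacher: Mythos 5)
There is a genuine gap in the backward direction, and it is fatal as stated: your condition $K\cdot|F|\le\max F+1$ bounds the \emph{average} density of $F$ up to its maximum, but it says nothing about the density of $F$ on intermediate initial segments, so the glued union $A=\bigcup_k F_k$ need not have upper density~$0$. Concretely, take $f=\id\in\Oo$, which is certainly in $\C_{I_{\bar d=0}}$. Fix $\varepsilon=\tfrac12$. For any $K$, put $N:=K$, $j:=\lceil K/2\rceil$, $M:=K(j+2)-1$, and $F:=\{0,1,\dots,j\}\cup\{M\}$. Then $|F|=j+2$ and $\max F+1=M+1=K(j+2)=K|F|$, so the cardinality constraint holds; and $|f(F)\cap[0,N)|=|F\cap[0,K)|=j+1\ge K/2=\varepsilon N$. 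Thus the right-hand side of your claimed equivalence holds for $f=\id$, while the left-hand side is false. The reason your gluing argument does not detect this is that you only control $|A_k|/\max F_k$; nothing prevents $F_k$ from containing a block $\{0,\dots,j_k\}$ with $j_k\to\infty$, which already forces $\limsup_m|A\cap[0,m)|/m=1$. Your remark that ``shared small elements contribute only a bounded set'' presupposes that the small part of $F_k$ stabilizes across $k$, but the finite-witness condition imposes no such stability.

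Your instinct that some small elements must be permitted is exactly right (your log example is the same obstruction the paper faces), but the remedy chosen in the paper is different and the distinction matters. The paper's notion of a \emph{bad} function requires a witnessing set $A\subseteq[i,n)$ satisfying the pointwise bound $|A\cap[0,r)|\le 2^{-i}r$ \emph{for all} $r$, not merely at $r=\max A+1$; this is what makes the gluing in Lemma~\ref{bad.not.C} produce a density-$0$ set. To reconcile that pointwise sparsity with the genuine need for small coordinates, the paper introduces \emph{shadows}: if $f\notin\CC$ but its bad behaviour requires pinning some arguments to small values, one passes to the proper shadow obtained by substituting those constants, and a minimal such shadow is then shown to be bad with a witnessing set contained in a tail $[i,n)$ (Lemmas~\ref{ak} and~\ref{minimal.bad}). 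The complement of $\CC^{(k)}$ is then a countable union of continuous preimages of the (clearly Borel) set of bad functions, indexed by permutations and constant tuples. If you want to salvage a direct finite-witness characterization without shadows, you would need to replace your single inequality by a pointwise sparsity requirement on $F$ together with an explicit, finitary mechanism for ``factoring out'' a bounded set of pinned coordinates --- which, made precise, is essentially the shadow machinery in disguise.
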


\begin{defn}
Let $k\geq 1$ and let $f\in\Ok$.  Each permutation $\pi$ of $\{1,\ldots, k\}$ induces 
a function $f_\pi\in\Ok$ by setting $f_\pi(x_1,\ldots, x_k) := f(x_{\pi(1)}, \ldots, x_{\pi(k)})$. Moreover, for each $0\leq \ell  < k$, each tuple $\bar a = (a_1,\ldots, a_\ell)\in\NN^\ell$ induces a $(k-\ell)$-ary
function $f_  {\bar a }$ by setting $f_  {\bar a }(y_1,\ldots, y_{k-\ell}) := 
f(a_1,\ldots, a_\ell, y_1,\ldots, y_{k-\ell})$. We call each 
function $f_{\pi,\bar a}$ a \emph{shadow} of $f$. When $\ell>0$, then we call $f_{\pi,\bar a}$ a \emph{proper shadow} of $f$. The functions $f_\pi$, which are just the functions $f_{\pi,\bar a}$ for a tuple $\bar a$ of length $\ell=0$, and in particular $f$ itself, are called \emph{improper shadows} of $f$. 
\end{defn}

Observe that proper shadows of $f$ have strictly smaller arity than $f$. If $f$ is unary, then it has no proper shadows.

\begin{defn}
Let $k\geq 1$. We call $f\in\Ok$ \emph{minimal} if the following hold: 
\begin{itemize}
\item $f\notin \CC$;
\item every proper shadow of $f$ is in $\CC$. 
\end{itemize}
\end{defn}

We will show that $\CC$ is a Borel set as follows: a function $f$ is not contained in $\CC$ if and only if it has a minimal (proper or improper) shadow. Having a minimal shadow will turn out to be equivalent to having what is called a \emph{bad} function as a  shadow. The set of bad functions has a nice definition which makes it a Borel set, and having a bad shadow is in turn easily shown to be a Borel property, proving the theorem.

\begin{defn}  
Let $k\geq 1$ and $f\in\Ok$. We say that $f$ is \emph{bad}
iff the following holds: There exists a rational number $\varepsilon >0$ such that for  all $i\in\NN$ 
there  are $n, t\geq i$ and $A\subseteq [i,n)$ with the following properties:
   \begin{itemize}
   \item $A$ is sparse with respect to $i$:\;  $|A\cap  [0,r)| \le  \frac{1}{2^{i}}\cdot r $ for all $r\in \NN$;
   \item $f[A^k]$ is dense in $[0,t)$ with respect to $\varepsilon$:\; $|f[A^k]\cap [0,t)| \ge \varepsilon \cdot t$. 
   \end{itemize}
\end{defn}
 
\begin{lem}\label{bad.not.C}
Let $k\geq 1$, and let $f\in\Ok$ be bad.  Then $f\notin \CC$.  
\end{lem}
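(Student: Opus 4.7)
The plan is to exhibit a set $A \subseteq \NN$ with $\bar d(A) = 0$ but $\bar d(f[A^k]) > 0$; by the definition of $\CC$, any such $A$ witnesses $f \notin \CC$. The badness of $f$ provides, for every threshold $i$, a local set $A_i$ supported on $[i, n_i)$ which is geometrically sparse (at rate $1/2^i$) and whose $k$-th power image is $\varepsilon$-dense up to some $t_i \ge i$. The idea is to splice countably many such local witnesses into a single global $A$: the exponential sparseness keeps the union of density zero, while the image bound on each scale $t_j$ keeps reappearing along an unbounded subsequence and forces the $\limsup$ to be at least $\varepsilon$.

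More concretely, I would recursively choose $1 \le i_1 < i_2 < \cdots$ and at stage $j$ invoke the badness assumption with parameter $i_j$ to obtain $n_j, t_j \ge i_j$ and a sparse $A_j \subseteq [i_j, n_j)$ with $|f[A_j^k] \cap [0, t_j)| \ge \varepsilon t_j$. Two constraints during the recursion suffice: pick $i_{j+1} > n_j$ so that the supports $[i_j, n_j)$ are pairwise disjoint, and ensure $i_j \ge j$ so that the tail sums $\sum_{j > J} 2^{-i_j}$ can be made arbitrarily small. Set $A := \bigcup_j A_j$.

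For $\bar d(f[A^k]) \ge \varepsilon$: since $A_j \subseteq A$, one has $f[A^k] \supseteq f[A_j^k]$, hence $|f[A^k] \cap [0, t_j)| \ge \varepsilon t_j$ for every $j$, and $t_j \to \infty$ feeds this into the $\limsup$. For $\bar d(A) = 0$: given $\delta > 0$, choose $J$ with $\sum_{j > J} 2^{-i_j} < \delta$, and apply the sparseness bound $|A_j \cap [0, r)| \le r/2^{i_j}$ termwise to estimate
$$ |A \cap [0, r)| \;\le\; \sum_{j \le J} |A_j| \;+\; r\sum_{j > J} 2^{-i_j} \;\le\; C_J + \delta r, $$
where $C_J$ is independent of $r$. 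Dividing by $r$ and letting $r \to \infty$ yields $\bar d(A) \le \delta$, and since $\delta$ was arbitrary, $\bar d(A) = 0$.

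There is no real obstacle in this argument: the factor $2^{-i}$ built into the sparseness clause of \emph{bad} is precisely tuned so that gluing countably many local witnesses preserves density zero. The only minor care needed is to choose the indices $i_j$ going to infinity fast enough, and to separate the blocks $[i_j, n_j)$, so that $|A \cap [0, r)|$ can be tallied cleanly from the individual $A_j$ using only the sparseness bound.
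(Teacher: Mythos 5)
Your proof is correct and takes essentially the same approach as the paper: recursively invoking badness at ever-larger thresholds, gluing the local witnesses $A_j$ into $A$, and using the built-in exponential sparseness bound $|A_j\cap[0,r)|\le r/2^{i_j}$ to tally $|A\cap[0,r)|$ and get density zero, while the $\varepsilon$-density at the scales $t_j\to\infty$ gives $\bar d(f[A^k])\ge\varepsilon$. The paper's bookkeeping (indexing by $n_{j-1}$ and cutting at $n_{v-1}$) differs only cosmetically from your split into the first $J$ blocks plus a geometric tail.
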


\begin{proof}
Let $\varepsilon$ as in the definition of badness, and set $n_0:=t_0:=0$. By inductively applying the definition of badness to $i>\max(n_{j-1},t_{j-1})$, we can find tuples $(n_j, t_j, A_j)$ for all $j\geq 1$ 
such that
\begin{itemize} 
\item $0< n_1<  n_2 < \cdots$;
\item $0< t_1 < t_2 < \cdots $;
\item $A_j \subseteq [n_{j-1},n_j)$ and $|A_j\cap [0, r)| \le  \frac{r}{2^{{n_{j-1}}}}$ for all $r\in\NN$; 
\item $ |f[A_j^k]\cap [0, t_j)| \ge  \varepsilon \cdot t_j$. 
\end{itemize} 
Now let $A:= \bigcup_{j\geq 1} A_j$. To see that $A$ has upper density~$0$, let any rational number $\delta>0$ be given; we will find $s\in\NN$ such that $\frac{1}{m}|A\cap [0, m)|<\delta$ for all $m>s$. To this end, pick a natural number $v>0$ such that $\frac{1}{2^{v}}<\frac{\delta}{2}$. Now pick $s\in\NN$ such that $\frac{n_{v-1}}{s}<\frac{\delta}{2}$. Then, for $m>s$, we have
\begin{align*}
\frac{1}{m}|A\cap [0, m)| 
&\leq \frac{1}{m} \left ( n_{v-1}+ |A\cap [n_{v-1},m)| \right )
<  \frac{\delta}{2}+ \frac{1}{m} \sum_{v\leq j} |A_j\cap [n_{j-1},m)| \leq\\
&\leq \frac{\delta}{2}+ \frac{1}{m} \sum_{v\leq j} \frac{m}{2^{n_{j-1}}} \leq \frac{\delta}{2}+ \frac{1}{m} \sum_{v\leq j} \frac{m}{2^{j-1}}
\leq \frac{\delta}{2}+ \frac{1}{2^v}<\delta.
\end{align*}
On the other hand, $f[A^k]$ has upper density of at least $\varepsilon$, since $|f[A^k]\cap[0,t_j)|\geq \varepsilon \cdot t_j$ for all $j>0$. Hence, $f\notin \CC$.
\end{proof}

We will use the following lemma in order to show that minimal functions are bad.

\begin{lem}\label{ak}
Let $k\geq 1$, and let $f\in\Ok$ be minimal. Let $B\subseteq \NN$ be so that the fact ``$f\notin \CC$''
is witnessed by $B$, i.e., $B$ has upper density 0, but 
 $f[B^k]$ has positive upper density. Then for each $i\geq 0$ the set $B \setminus [0,i]$ also witnesses 
that $f\notin \CC$; in fact, the sets $f[B^k]$ and
$f[(B\setminus [0,i])^k]$ have equal upper density.
\end{lem}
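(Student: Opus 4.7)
The plan is to split $B$ into its part below $i$ and its part above. Write $B' := B \setminus [0,i]$ and $F := B \cap [0,i]$, so that $F$ is finite. Since upper density is insensitive to finite changes, $B'$ again has upper density $0$, so the only substantive claim is that $f[B^k]$ and $f[(B')^k]$ have equal upper density.

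The key step is to decompose $B^k = (B' \cup F)^k$ into $2^k$ rectangular pieces indexed by $\varepsilon \in \{0,1\}^k$: the $j$-th factor is $B'$ if $\varepsilon_j = 1$ and $F$ if $\varepsilon_j = 0$. The all-ones piece contributes exactly $f[(B')^k]$. I claim every other piece contributes a set of upper density $0$. Fix $\varepsilon$ with $J := \{j : \varepsilon_j = 0\} \neq \emptyset$ and set $\ell := |J| \geq 1$. The image of the corresponding piece is
\[
\bigcup_{\bar a \in F^\ell} f_{\pi, \bar a}\!\left[(B')^{k-\ell}\right],
\]
where $\pi$ is a permutation moving the coordinates in $J$ to the first $\ell$ positions. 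Since $\ell \geq 1$, each $f_{\pi,\bar a}$ is a \emph{proper} shadow of $f$, and minimality of $f$ gives $f_{\pi,\bar a} \in \CC$. Because $B'$ has upper density $0$, so does $(B')^{k-\ell}$ in the appropriate sense, and hence each $f_{\pi, \bar a}[(B')^{k-\ell}]$ has upper density $0$. As $F^\ell$ is finite, the displayed union is a finite union of sets of upper density $0$, so it still has upper density $0$.

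Let $Z$ be the union of these $2^k - 1$ pieces. Then $Z$ has upper density $0$ and
\[
f[(B')^k] \;\subseteq\; f[B^k] \;\subseteq\; f[(B')^k] \cup Z.
\]
Finite subadditivity of upper density (or the trivial estimate $\bar d(X \cup Y) \leq \bar d(X) + \bar d(Y)$) then forces $\bar d(f[B^k]) = \bar d(f[(B')^k])$. In particular, the common value is the positive upper density of $f[B^k]$, and $B'$ witnesses $f \notin \CC$.

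There is no real obstacle: minimality of $f$ is used exactly once, to place every proper shadow in $\CC$ and thereby control the $2^k - 1$ cross pieces. The only care needed is in the combinatorial bookkeeping that identifies each non-trivial piece as a finite union of images of proper shadows applied to a power of $B'$.
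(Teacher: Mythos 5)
Your proof is correct and follows essentially the same strategy as the paper: use minimality of $f$ to place proper shadows with parameters from the finite set $F = B\cap[0,i]$ in $\CC$, so their contributions to $f[B^k]$ all have upper density~$0$, and conclude by a finite-union argument. The paper organizes this by subtracting a density-$0$ ``bad'' set from $f[B^k]$ and showing the remainder lies in $f[(B\setminus[0,i])^k]$; you instead partition $B^k$ into $2^k$ rectangular pieces and show the image of every piece other than $(B')^k$ has upper density $0$. This is interchangeable bookkeeping, not a genuinely different route.

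There is one small gap: the all-zeros piece (where $\varepsilon=\bar 0$, so $J=\{1,\ldots,k\}$ and $\ell=k$) is not covered by your proper-shadow argument, since shadows $f_{\pi,\bar a}$ are only defined for $0\le\ell<k$. Your sentence ``each $f_{\pi,\bar a}$ is a proper shadow of $f$'' is false in this case. The fix is trivial: that piece is $F^k$, which is finite, so its image $f[F^k]$ is finite and hence of upper density $0$. Note also that when $k=1$ the all-zeros piece is the \emph{only} non-all-$B'$ piece, so your argument as written would rest entirely on this missing case; the paper avoids this by treating $k=1$ separately at the outset.
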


\begin{proof}
If $k=1$, then $f[(B\setminus [0,i])^k]=f[B]\setminus f[[0,i]]$, so the statement follows immediately since $f[[0,i]]$ is finite. Now assume $k\geq 2$.
Let $S$ be the set of all proper shadows $f_{\pi,\bar a}$
of $f$ for which the tuple $\bar a$ contains only elements of $[0,i]$; since $k\geq 2$, this set is non-empty. By the minimality of $f$, each $f'\in S$ is an element of $\CC$, and so the set $f'[B^{k-{\ell_{f'}}}]$ (for appropriate $1\leq \ell_{f'}<k$) has upper density~$0$. Since $S$ is finite, the set 
$$D:= f[B^k] \setminus  \bigcup_{f'\in S}  f'[B^{k-{\ell_{f'}}}]$$
still has the same positive upper density as $f[B^k] $.   It remains to check that 
$f[(B\setminus [0,i])^k] \supseteq D$.  Let $d\in D$.  
Then $d$ cannot be written as $d=f(b_1,\ldots, b_k)$ with all  $b_i\in B$
and at least one $b_i$ in $[0,i]$, as any such $d$ would be in
$f'[B^k]$ for some proper shadow $f'\in S$. On the other hand, $d$ can be 
written as $d=f(b_1,\ldots, b_k)$ with all  $b_i\in B$.   Hence 
$d\in f[(B\setminus [0,i])^k] $.
\end{proof}

\begin{lem}\label{minimal.bad}
Let $f\in \O$ be minimal.  Then $f$ is bad. 
\end{lem}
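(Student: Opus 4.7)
The plan is to build the witnesses for badness directly from a witness $B$ that $f \notin \CC$, using Lemma~\ref{ak} to absorb the initial-segment adjustments that will be needed to achieve sparseness.

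I would start by fixing a witness $B \subseteq \NN$ of upper density $0$ such that $f[B^k]$ has positive upper density $\alpha > 0$, and choosing a rational $\varepsilon$ with $0 < \varepsilon < \alpha$ to serve as the constant in the definition of badness. The claim is then that this single $\varepsilon$ works for every $i \in \NN$. Given such an $i$, I would first truncate $B$ to the right: since $B$ has upper density $0$, there is some $M \geq i$ with $|B \cap [0, r)| \leq r/2^i$ for every $r \geq M$, and setting $B' := B \setminus [0, M)$ makes $B'$ sparse with respect to $i$ (vacuously for $r \leq M$, and by the choice of $M$ for $r > M$). The crucial point, and the only place where minimality of $f$ enters the argument, is that by Lemma~\ref{ak} the set $f[(B')^k]$ still has upper density $\alpha > \varepsilon$, so there exists $t \geq i$ with $|f[(B')^k] \cap [0, t)| \geq \varepsilon \cdot t$.

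Finally, to obtain an $A$ with bounded support I would use that $f[(B')^k] = \bigcup_n f[(B' \cap [0, n))^k]$ is an increasing union whose intersection with the finite set $[0, t)$ stabilizes at some $n \geq i$; setting $A := B' \cap [0, n)$ then gives $A \subseteq [M, n) \subseteq [i, n)$, and both the sparseness and the density inequalities transfer to $A$ automatically. I do not foresee a real obstacle here: minimality has already done its work inside Lemma~\ref{ak}, which guarantees that the witness $B$ remains effective after removing arbitrary finite initial segments and thereby lets us push the sparseness threshold $M$ as high as each given $i$ requires.
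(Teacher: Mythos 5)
Your proof is correct and follows essentially the same approach as the paper: truncate $B$ at a threshold $M$ (the paper calls it $m$) chosen so the tail is sparse with respect to $i$, invoke Lemma~\ref{ak} to preserve the upper density of the image, then pick $t$ from the positive upper density and $n$ from the stabilization of the increasing union $f[(B' \cap [0,n))^k] \cap [0,t)$. The paper's $D$ is your $B'$ and its final $A = B \cap [m,n)$ is exactly your $A$.
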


\begin{proof}   
Write $k$ for the arity of $f$. Let $B\subseteq \NN$ be so that $\bar d(B)=0$ and $\bar d(f[B^k])>0$, and let $\varepsilon$ be a positive rational number such that $\bar d(f[B^k])> \varepsilon$. Given $i\in\NN$  we have to find $n,t,A$ as in the definition of badness. 

Since $\bar d(B)=0$, we can pick $m\geq i$ so large that $|B\cap [0,j)|\le \frac{1}{2^{i}}\cdot j$
for all $j\ge m$. Set $D:=B\cap [m,\infty)$. Then by Lemma~\ref{ak}, $\bar d(f[D^k])= \bar d(f[B^k]) >\varepsilon$. Hence, we can find $t\geq i$ such that $f[D^k]\cap
[0,t)$ has size at least $ \varepsilon\cdot t$. Now choose $n\geq m$ such that
$f[D^k]\cap [0,t) = f[(D \cap [0,n))^k]\cap [0,t)$. Finally, set $A:= D \cap [0,n) =  B \cap [m,n)$.
\end{proof} 

\begin{lem}\label{lem:equi}
Let $f\in\O$.  The following are equivalent: 
\begin{enumerate}
\item[(a)]  $f\notin \CC$. 
\item[(b)] There exists a shadow of $f$ which is minimal.  
\item[(c)] There exists a shadow of $f$ which is bad. 
\item[(d)] There exists a shadow of $f$ which is not in $\CC$. 
\end{enumerate}
\end{lem}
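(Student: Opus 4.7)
The plan is to close the short cycle $(a)\Rightarrow(d)\Rightarrow(b)\Rightarrow(c)\Rightarrow(a)$; two of the four arrows are essentially free. The implication $(a)\Rightarrow(d)$ is trivial because $f$ is its own improper shadow, and $(b)\Rightarrow(c)$ is exactly Lemma~\ref{minimal.bad} applied to the minimal shadow supplied by (b). So only two steps require actual thought, and neither is deep.

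For $(d)\Rightarrow(b)$, I would pick, among all shadows of $f$ which are not in $\CC$, one of minimum arity, call it $f^\ast$. By construction $f^\ast\notin\CC$, so minimality only requires that every proper shadow of $f^\ast$ lies in $\CC$. The key bookkeeping point is that the shadow relation is transitive: a shadow of $f^\ast=f_{\pi,\bar a}$ is, after re-indexing the arguments, again a shadow of $f$, obtained by extending $\bar a$ with further fixed coordinates and composing $\pi$ with a suitable permutation. Since any proper shadow of $f^\ast$ has strictly smaller arity than $f^\ast$, the minimality of the arity of $f^\ast$ forces all such shadows into $\CC$.

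For $(c)\Rightarrow(a)$, Lemma~\ref{bad.not.C} turns a bad shadow of $f$ into a shadow of $f$ that is not in $\CC$, so the task reduces to the claim that $\CC$ is closed under taking shadows (contrapositively, if some shadow of $f$ fails to be in $\CC$, then $f\notin\CC$). For a permutation shadow this is immediate from $f_\pi[B^k]=f[B^k]$. For a parameter shadow $f_{\bar a}$ with $\bar a=(a_1,\ldots,a_\ell)$ I would use the inclusion
\[
f_{\bar a}[B^{k-\ell}]\;\subseteq\; f\bigl[(B\cup\{a_1,\ldots,a_\ell\})^k\bigr],
\]
together with the observation that $B\cup\{a_1,\ldots,a_\ell\}$ still has upper density $0$ whenever $B$ does, since it differs from $B$ by a finite set. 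Combining the two cases, $f\in\CC$ implies that every shadow of $f$ lies in $\CC$, as required.

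Since the substantial work has already been done in Lemmas~\ref{bad.not.C} and~\ref{minimal.bad}, I do not expect a real obstacle here. The only two points needing care are the transitivity of the shadow relation used to verify minimality in $(d)\Rightarrow(b)$, and the short computation that $\CC$ is stable under the two shadow operations used in $(c)\Rightarrow(a)$; both are routine.
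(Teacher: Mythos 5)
Your proof is correct and uses essentially the same ingredients as the paper's: picking a shadow of minimal arity outside $\CC$, Lemma~\ref{minimal.bad}, Lemma~\ref{bad.not.C}, and the observation that adjoining the entries of $\bar a$ to a density-zero set preserves density zero. The only differences are cosmetic---you traverse the cycle in the order $(a)\Rightarrow(d)\Rightarrow(b)\Rightarrow(c)\Rightarrow(a)$ rather than the paper's $(a)\Rightarrow(b)\Rightarrow(c)\Rightarrow(d)\Rightarrow(a)$, and you spell out the transitivity of the shadow relation that the paper leaves implicit.
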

\begin{proof}[Proof of (a)$\Rightarrow$(b)]
 Let $S$ be the set of shadows of $f$ which are
not in $\CC$.   Let $g\in S$ have minimal arity.  Then $g$ is minimal.
\end{proof}

\begin{proof}[Proof of (b)$\Rightarrow$(c)]
  Every minimal function is bad, by Lemma~\ref{minimal.bad}.
\end{proof}

\begin{proof}[Proof of (c)$\Rightarrow$(d)]
A bad function cannot be in $\CC$, by Lemma~\ref{bad.not.C}.
\end{proof}

\begin{proof}[Proof of (d)$\Rightarrow$(a)]
  Let $f_{\pi,\bar a}$ be a shadow of $f$ which is not in $\CC$, and let $B\subseteq\NN$ be a set of upper density~$0$ which is sent to a set of positive upper density under $f_{\pi,\bar a}$. Let $B'$ be the set obtained by adding all entries of the tuple $\bar a$ to $B$. Then $B'$ still has upper density~$0$, and $f$ sends $B'$ to a set of positive upper density.
\end{proof}

\begin{proof}[Proof of Theorem~\ref{thm:Borel}]
Clearly, the set $\B$ of bad functions is Borel since its definition only quantifies over natural and rational numbers. We show that $\CC\uk$ is Borel for all $k\geq 1$. For each $\pi$ in the set $S(\{1,\ldots,k\})$ of all permutations of $\{1,\ldots,k\}$ and each tuple $\bar a = (a_1,\ldots, a_\ell)\in\NN^\ell$, where $0\leq \ell  < k$, the mapping $\kappa_{\pi,\bar {a}}$ from $\Ok$ to $\O$ which sends every $f\in \Ok$ to $f_{\pi,\bar {a}}$ is continuous. By Lemma~\ref{lem:equi},
$$
	\Ok\setminus\CC=\bigcup_{\pi\in S(\{1,\ldots,k\})}\ \bigcup_{0\leq \ell  < k}\ \bigcup_{\bar a\in \NN^\ell} \kappa_{\pi,\bar a}\inv[\B]
$$
Since $\B$ is Borel, each of its continuous preimages $\kappa_{\pi,\bar a}\inv[\B]$ is Borel, and so is the countable union of these sets.

\end{proof}

\subsection{Precompleteness of $\CC$}

We will now show the following.

\begin{thm}\label{thm:precomplete}
 $\C_{I_{\bar d=0}}$ is \emph{precomplete}, i.e., a coatom of the lattice of all clones on $\NN$.
\end{thm}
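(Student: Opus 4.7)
The plan is to show that adjoining any $f \notin \CC$ to $\CC$ generates the full clone $\O$. By Lemma~\ref{lem:equi}, $f$ has a bad shadow; since constants lie in $\CC$ and projections lie in every clone, every shadow of $f$ belongs to $\Clo(\CC \cup \{f\})$, so I may assume $f$ itself is bad of some arity $k$. Let $(A_j, n_j, t_j)$ be badness witnesses with parameter $\varepsilon > 0$; set $A := \bigcup_j A_j$ and $S := f[A^k]$, so $\bar d(A) = 0$ and $\bar d(S) \geq \varepsilon$. Any tuple of functions $\pi_1, \ldots, \pi_k \colon \NN \to A$ lies in $\CC$ (since each range is contained in the density-$0$ set $A$), so the composition $x \mapsto f(\pi_1(x), \ldots, \pi_k(x))$ lies in $\Clo(\CC \cup \{f\})$; by varying the $\pi_i$ I realize in the generated clone every unary function whose range is contained in $S$.

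The second step reduces the generation of all of $\O$ to the generation of $\Oo$. The standard pairing bijection $b(x, y) := 2^x(2y+1) - 1$ lies in $\CC$: a direct upper-density estimate yields $|b[B \times B] \cap [0, N)| \leq \varepsilon' N + O(\log N)$ for every density-$0$ set $B$, every $\varepsilon' > 0$, and all sufficiently large $N$. Hence once $\Oo \subseteq \Clo(\CC \cup \{f\})$ is established, the standard encoding $h = \tilde h \circ b^{(n-1)}$ of an $n$-ary $h$ via an iterated pairing $b^{(n-1)} \colon \NN^n \to \NN$ and a unary $\tilde h$ yields $\Clo(\CC \cup \{f\}) = \O$. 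So it suffices to produce every $h \in \Oo$.

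For this, the natural tool is the order-preserving bijection $\iota \colon \NN \to S$, $\iota(n) = s_n$; it always lies in $\CC$, because $s_n \geq n$ forces $|\iota[B] \cap [0, N)| \leq |B \cap [0, N)|$. If the extension of its inverse $\iota\inv \colon \NN \to \NN$ (sending $\NN \setminus S$ to $0$) also lies in $\CC$, then for any $h \in \Oo$ the composition $\iota \circ h$ has range in $S$ and lies in the generated clone by the first paragraph, whence $h = \iota\inv \circ (\iota \circ h)$ does too. A short calculation shows that $\iota\inv \in \CC$ precisely when $s_n/n$ is bounded, i.e., $S$ has positive \emph{lower} density; I would secure this by refining the badness witnesses so that the thresholds form a geometric sequence $t_{j+1} \leq C t_j$. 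Then the accumulated bursts $|f[A^k] \cap [0, t_j)| \geq \varepsilon t_j$ force $|S \cap [0, m)| \geq (\varepsilon/C) m$ at every sufficiently large $m$.

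The main obstacle is this final refinement: the definition of badness only guarantees the \emph{existence} of some $t \geq i$ for each $i$, with no upper bound on $t$. Forcing geometrically spaced witnesses requires a combinatorial argument --- either an adaptive iteration of the badness definition yielding witnesses at controlled thresholds, or a clone-internal construction that rearranges the bursts via $\CC$-functions (shifts, block permutations, and other density-preserving bijections) to fill the gaps between the original bursts. This combinatorial core is the heart of the precompleteness proof; the rest of the argument is essentially the routine reduction sketched above.
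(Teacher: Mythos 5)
Your steps (1) and (2) are sound: shadows live in $\Clo(\CC\cup\{f\})$ because constants are in $\CC$, the family of unary maps into $A$ does realize every unary function with range in $S=f[A^k]$, and the pairing $b(x,y)=2^x(2y+1)-1$ is indeed in $\CC$ (a $\sum_x 2^{-x}$ estimate gives $\bar d(b[T\times T])\le 2\bar d(T)$ up to lower-order terms). The proof collapses at step (3), and the gap is not a missing computation but a missing idea. You need $\iota\inv\in\CC$, i.e.\ $S$ of positive \emph{lower} density, and you acknowledge badness does not supply this; but neither of your proposed fixes can work. There are $f\notin\CC$ whose entire range has lower density $0$, so no choice of witness $A'$ and no "refinement of the badness thresholds" helps: take $c_{j+1}>j\cdot c_j$, let $f$ map $[a_j,a_j+c_j)$ bijectively onto $[c_j,2c_j)$ and everything else to $0$, with $a_j$ so large that $c_j/a_j\to 0$; then $f\notin\CC$ (witnessed by $A=\bigcup_j[a_j,a_j+c_j)$), yet $f[\NN]\subseteq\{0\}\cup\bigcup_j[c_j,2c_j)$ has lower density $0$. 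Nor can any unary "rearranging" $\phi$ save you: $|\phi[X]|\le|X|$, and $|S\cap[0,2c_J)|\le\sum_{j\le J}c_j\le 2c_J\ll c_{J+1}$, so no unary image of $S$ can cover $[0,c_{J+1})$ with density bounded below. Shifts and block permutations cannot inflate a set.

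What the paper does instead --- and what is really forced --- is to go \emph{binary} at exactly this point. After reducing to unary $g\notin\CC$ (Lemma~\ref{known.1}, close in spirit to your step (1)), it pushes $B=g[A]$ onto a set $C=\bigcup_i[n_i,2n_i)$ by some unary $f\in\CC$ (Lemma~\ref{n2n}), and then, in the crucial Lemma~\ref{ab.lemma}, manufactures a \emph{second} density-$0$ set $D$ whose dyadic blocks $D_k$ interleave with those of $C$, together with a binary $h\in\CC$ with $h[C\times D]=\NN$. The extra coordinate $D$ carries exactly the counting capacity that is missing in $C$ across the large gaps, and one then closes with Lemma~\ref{known.2} applied to $t(x,y)=h(f(g(x)),y)$ and $Z=A\cup D$. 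So the "combinatorial core" you defer is not a loose end of your argument; it is a different argument, and to complete your proof you would have to abandon the $\iota\inv$ route and essentially rediscover Lemma~\ref{ab.lemma}.
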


The strategy is the following: we will first show that  every set $B$ of  positive upper density can be mapped by a unary function from $\CC$ onto a ``large set'', that is, a 
set containing infinitely many intervals of the form $[n,2n]$.   We then show that
for every large set  $C$ there is a set $D$ of upper density~$0$ such that
$C\times D$ can be mapped 
onto all of $\NN$ by a binary function from $\CC$. These two facts together imply that every function $g\nin\CC$, together with $\C$, generates (by building terms) a function $g'$
mapping set of upper density~$0$ onto $\NN$; and it is well-known that the only clone containing
 $\CC\cup \{g'\}$ is $\O$. Hence, the only clone containing $\CC\cup \{g\}$ is $\O$ as well, and thus $\CC$ is a coatom since $g$ was an arbitrary function outside $\CC$.

\begin{lem} \label{easy.2}
 $T\subseteq \NN$ has upper density 0 if the limit of $\frac{ |  T \cap [ 2^k, 2^{k+1})|}{2^k}$, where $k$ goes to infinity, 
exists and equals 0.
\end{lem}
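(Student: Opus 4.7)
The plan is to bound $\frac{|T\cap[0,n)|}{n}$ in terms of the dyadic densities $a_k := \frac{|T\cap[2^k,2^{k+1})|}{2^k}$, and then use a Ces\`aro-type argument to show that the bound tends to $0$ whenever the $a_k$ do. The geometric growth of the intervals $[2^k,2^{k+1})$ is what makes this work cleanly: most of the mass of $[0,2^{k+1})$ lies in its final dyadic block, so the tail values of the sequence $(a_j)$ dominate the estimate.

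In detail, given $n \geq 1$ I would choose $k$ with $2^k\le n<2^{k+1}$ and estimate
\begin{equation*}
\frac{|T\cap[0,n)|}{n}\ \le\ \frac{|T\cap[0,2^{k+1})|}{2^k}\ \le\ \frac{1}{2^k}\Bigl(1+\sum_{j=0}^{k}|T\cap[2^j,2^{j+1})|\Bigr)\ =\ \frac{1}{2^k}+\frac{1}{2^k}\sum_{j=0}^{k}a_j\,2^j.
\end{equation*}
Letting $n\to\infty$ forces $k\to\infty$, so it suffices to prove that $\frac{1}{2^k}\sum_{j=0}^{k}a_j\,2^j\to 0$. Given a rational $\varepsilon>0$, pick $J$ such that $a_j<\varepsilon$ for all $j>J$, and split the sum at $J$: the initial segment $\frac{1}{2^k}\sum_{j\le J}a_j\,2^j$ is a fixed constant divided by $2^k$ and hence tends to $0$, while the tail satisfies
\begin{equation*}
\frac{1}{2^k}\sum_{j=J+1}^{k}a_j\,2^j\ \le\ \frac{\varepsilon}{2^k}\sum_{j=0}^{k}2^j\ \le\ 2\varepsilon.
\end{equation*}

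Thus $\limsup_{n\to\infty}\frac{|T\cap[0,n)|}{n}\le 2\varepsilon$ for every $\varepsilon>0$, which gives $\bar d(T)=0$. There is no real obstacle here beyond bookkeeping: the only subtle point is to notice that because the weights $2^j/2^k$ in the sum are geometric and peak at $j=k$, one cannot simply bound the sum by the maximum of the $a_j$'s, but must separate off an initial segment where the $a_j$'s may be large and handle it via the factor $1/2^k$. This is the usual trick for Ces\`aro means with exponentially growing weights, and nothing deeper is needed.
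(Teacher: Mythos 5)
Your proof is correct, and since the paper dismisses this lemma with a one-word "Easy," the standard dyadic-block argument you give is exactly what is intended. The estimate $\frac{|T\cap[0,n)|}{n}\le\frac{1}{2^k}+\frac{1}{2^k}\sum_{j=0}^{k}a_j 2^j$ for $2^k\le n<2^{k+1}$, followed by the split at a threshold $J$ past which $a_j<\varepsilon$, is clean and complete.
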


\begin{proof} 
Easy.
\end{proof}

\begin{lem}\label{easy.3}
Let $\varepsilon>0$ be a rational number, and  $f:\NN\to \NN$ satisfy $f(n) \ge
n\cdot\varepsilon$ for all $n\ge 0$.  Then for all $A \subseteq \NN$
we have $\bar d(f[A]) \le \frac{1}{\varepsilon}\cdot\bar d (A)$.  In particular, 
$f\in \CC$. 
\end{lem}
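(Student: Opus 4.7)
The plan is to exploit the linear lower bound on $f$ to pull initial segments back to controlled initial segments, and then translate this into a direct density estimate.

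First I would observe that the hypothesis $f(n)\geq \varepsilon\cdot n$ forces any $a$ with $f(a)<m$ to satisfy $a<m/\varepsilon$. Equivalently, $f\inv[[0,m)]\subseteq [0,m/\varepsilon)$, and therefore
$$
f[A]\cap [0,m) \;\subseteq\; f\bigl[A\cap [0, m/\varepsilon)\bigr],
$$
which yields the counting inequality $|f[A]\cap [0,m)| \leq |A\cap [0,\lceil m/\varepsilon\rceil)|$ for every $A\subseteq\NN$ and every $m\geq 1$.

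Next I would divide by $m$ and rewrite as
$$
\frac{|f[A]\cap [0,m)|}{m} \;\leq\; \frac{1}{\varepsilon}\cdot \frac{|A\cap [0,\lceil m/\varepsilon\rceil)|}{\lceil m/\varepsilon\rceil}\cdot\frac{\lceil m/\varepsilon\rceil}{m/\varepsilon}.
$$
Taking the limit superior as $m\to\infty$, the last factor tends to $1$, and the middle factor has limsup bounded by $\bar d(A)$ (since $\lceil m/\varepsilon\rceil\to\infty$ as $m\to\infty$). This gives the claimed inequality $\bar d(f[A]) \leq \frac{1}{\varepsilon}\bar d(A)$.

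For the final assertion $f\in\CC$, I would note that $f$ is unary so $f[A^1]=f[A]$, and the bound just proved shows that $\bar d(A)=0$ implies $\bar d(f[A])=0$. The only mild technical point is the passage from $m/\varepsilon$ to $\lceil m/\varepsilon\rceil$, which is routine since $\varepsilon$ is a fixed rational and the correction factor converges to $1$; beyond that, there is no real obstacle, as the lemma is essentially a one-step arithmetic translation of the linear growth hypothesis into a density inequality.
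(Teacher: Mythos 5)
Your proof is correct and follows essentially the same route as the paper: use $f(n)\ge\varepsilon n$ to deduce $f[A]\cap[0,m)\subseteq f[A\cap[0,m/\varepsilon)]$, pass to a counting inequality, and renormalize by $m/\varepsilon$ before taking the limsup. (Incidentally, your care with the $\lceil m/\varepsilon\rceil$ correction factor is slightly tidier than the paper's display, whose last step carries a minor typo with $\varepsilon$ in place of $1/\varepsilon$.)
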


\begin{proof}  For each $n\geq 1$ we have 
$$
\frac1n \bigl|f[A] \cap [0,n) \bigr| 
\le \frac 1n \bigl|f\bigl[A\cap [0,\frac{n}{\varepsilon})\bigr]\bigr| 
\le  \frac 1{n} \bigl | A \cap [0, \frac{n}{\varepsilon})\bigr |
= \varepsilon\cdot \frac 1{\frac n\varepsilon} \bigl | A \cap [0, \frac{n}{\varepsilon})\bigr |
$$ 
\end{proof}

The next lemma has been shown, for example, in~\cite{BGHP} and~\cite{CH01}, but we include the proof for the reader's convenience. In the following, for $\F\subseteq \O$ we write $\cl{\F}$ for the clone of all term operations over $\F$, i.e., the smallest clone containing $\F$.
 
\begin{lem} \label{known.1} 
Let $g\notin \CC$.  Then $\langle \CC\cup \{g\}\rangle$ contains a unary function which is not in $\CC$.
\end{lem}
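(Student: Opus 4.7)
The plan is a direct construction; no appeal to the shadow/minimality/badness machinery from the previous subsection is needed. If $g$ is itself unary, take $g$: it is unary, it lies in $\cl{\CC\cup\{g\}}$, and by hypothesis $g\notin\CC$. So assume $g$ has some arity $k\geq 2$. The hypothesis $g\notin\CC$ gives an (automatically infinite) set $B\subseteq\NN$ with $\bar d(B)=0$ and $\bar d(g[B^k])>0$. Since $B$ is countably infinite, so is $B^k$; fix once and for all an enumeration $B^k=\{v_m:m\geq 0\}$. Partition $\NN\setminus\{0\}$ into the intervals $I_m:=[2^m,2^{m+1})$, and for each $i\in\{1,\dots,k\}$ define a unary function $u_i:\NN\to\NN$ by
$$u_i(n):=(v_m)_i\quad \text{whenever } n\in I_m,$$
with $u_i(0)$ chosen arbitrarily in $B$. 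The key observation is that each $u_i$ has image contained in $B$, hence for every $D\subseteq\NN$ we have $u_i[D]\subseteq B$, which has upper density $0$. Therefore each $u_i$ belongs to $\CC$.

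Now set $h(x):=g(u_1(x),\dots,u_k(x))$. This is a unary function, and since $u_1,\dots,u_k\in\CC$ and $g\in\cl{\CC\cup\{g\}}$, the clone axioms give $h\in\cl{\CC\cup\{g\}}$. It remains to verify $h\notin\CC$. Take $D_0:=\{2^m:m\geq 0\}$; this set has upper density $0$ (in fact $|D_0\cap[0,n)|=O(\log n)$). For each $m\geq 0$ the point $2^m$ is the least element of $I_m$, so
$$h(2^m)=g\bigl((v_m)_1,\dots,(v_m)_k\bigr)=g(v_m),$$
which yields $h[D_0]=g[B^k]$. Since $\bar d(g[B^k])>0$, the unary function $h$ sends the density-$0$ set $D_0$ to a set of positive upper density, so $h\notin\CC$, completing the proof.

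The only mild point in the argument is guaranteeing that the "coordinate functions" $u_1,\dots,u_k$ lie in $\CC$, and this is immediate from the general fact that any function whose image is contained in a set of upper density $0$ automatically belongs to $\CC$. This is what lets us funnel the $k$ inputs of $g$ through a density-$0$ target set essentially for free, while still having enough freedom in the enumeration of $B^k$ to produce, via the spacing $[2^m,2^{m+1})$, a sparse witness set $D_0$ that $h$ blows up to positive density.
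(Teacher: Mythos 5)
Your proof is correct and relies on the same core idea as the paper's: compose $g$ with coordinate functions whose ranges lie inside a density-$0$ set (hence automatically belong to $\CC$), producing a unary function that carries a density-$0$ witness set onto $g[B^k]$. The only cosmetic difference is that the paper picks a bijection from $A$ onto $A^k$ and reuses $A$ itself as the witness set, whereas you introduce the auxiliary spacing $[2^m, 2^{m+1})$ and the fresh witness $D_0=\{2^m : m\geq 0\}$; also, your separate treatment of the unary case is harmless but unnecessary, since the general construction works for $k=1$ as well.
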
 
\begin{proof} 
Let $g$ be $k$-ary, where $k\geq 1$. Since  $g\notin \CC$, there is an infinite set $A\subseteq \NN$ of upper density $0$ such that 
 $g[A^k]$ has positive upper density. 
Let $f_1, \ldots, f_k$ be functions from $A$ into $A$ such 
that the function $n\mapsto (f_1(n), \ldots, f_k(n))$ is
a bijection from $A$ onto $A^k$.   If we set $f_i(n)=0$ 
for all $n\notin A$, then $f_i\in \CC$ for 
all $1\leq i\leq k$ since the range of $f_i$ is contained in a set of upper density~$0$.  The unary function $h(n):= g(f_1(n), \ldots, f_k(n))$
now maps $A$ onto  $g[A^k]$. 
\end{proof} 

 Slightly modifying the usual Landau symbol, we 
 will in the following write $O(x)$ for any quantity  in the interval  $[0, x]$, for any rational number $x\geq 0$ . 

\begin{lem}\label{n2n}
Let $B\subseteq \NN$ have positive upper density. 
Then there is a unary function $f\in \CC$ and a strictly  
increasing sequence $(n_i)_{i\geq 1}$ of natural numbers such that $f[B]\supseteq \bigcup_{i\geq 1} [n_i, 2n_i) $.
\end{lem}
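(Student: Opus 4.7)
The plan is to use the positive upper density of $B$ to locate infinitely many dyadic blocks $[2^{k_i},2^{k_i+1})$ on which $B$ is ``thick'', and on each such block to define $f$ so as to surject the $B$-part onto an interval of the form $[n_i,2n_i)$. That $f$ lies in $\CC$ will then follow from Lemma~\ref{easy.3} once we arrange a uniform bound $f(n)\ge \varepsilon n$ everywhere.

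As a first step I claim there exist a rational $\beta>0$ and an infinite sequence $k_1<k_2<\cdots$ with
$$ |B\cap [2^{k_i},2^{k_i+1})|\ge \beta\cdot 2^{k_i} \quad \text{for all } i\ge 1. $$
This is a direct computation in the spirit of Lemma~\ref{easy.2}: if for every rational $\beta>0$ one had $|B\cap [2^k,2^{k+1})|<\beta\cdot 2^k$ for all sufficiently large $k$, then summing across dyadic blocks would force $\bar d(B)\le 2\beta$, contradicting $\bar d(B)>0$ for $\beta$ small enough. By thinning the sequence we may further assume that $k_{i+1}$ is arbitrarily much larger than $k_i$.

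Now set $n_i:=\lfloor \beta\cdot 2^{k_i-1}\rfloor$ and $B_i:=B\cap [2^{k_i},2^{k_i+1})$; for $k_i$ large enough we have $|B_i|\ge n_i$, so we may pick any surjection $f_i\colon B_i \to [n_i,2n_i)$. Define $f\colon \NN\to\NN$ by $f(n):=f_i(n)$ if $n\in B_i$ for some $i$, and $f(n):=n$ otherwise. Since the $B_i$ sit in pairwise disjoint dyadic blocks, $f$ is well-defined, and $f[B]\supseteq\bigcup_i [n_i,2n_i)$ by construction. The thinning of $(k_i)$ makes $(n_i)$ strictly increasing.

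It remains to verify $f\in\CC$. If $n\notin\bigcup_i B_i$, then $f(n)=n$. If $n\in B_i$, then $n<2^{k_i+1}$ while $f(n)\ge n_i\ge \tfrac{\beta}{2}\cdot 2^{k_i-1}$ once $k_i$ is large enough to absorb the floor, so $f(n)\ge (\beta/8)\cdot n$. Adjusting $f$ on the finitely many small $n$ if necessary, we obtain $f(n)\ge\varepsilon n$ for all $n$ with $\varepsilon:=\beta/8$, and Lemma~\ref{easy.3} yields $f\in\CC$. The only delicate point is the balancing act between making $[n_i,2n_i)$ small enough to fit inside $|B_i|$ yet large enough to keep $f(n)/n$ bounded below by a positive constant; the choice of $n_i$ of order $2^{k_i-1}$ handles both requirements simultaneously.
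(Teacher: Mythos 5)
Your proof is correct, and it follows the same overall strategy as the paper: locate infinitely many scales at which $B$ is thick, surject each thick window onto an interval $[n_i,2n_i)$ of comparable size, and keep $f(x)\geq \varepsilon x$ throughout so that Lemma~\ref{easy.3} gives $f\in\CC$. The one genuine difference is the choice of interval family. You use dyadic blocks $[2^{k_i},2^{k_i+1})$ and derive thickness by contraposing a dyadic density estimate (a strengthened converse of Lemma~\ref{easy.2}: if $|B\cap[2^k,2^{k+1})|<\beta 2^k$ eventually, then $\bar d(B)\le 2\beta$). The paper instead fixes an integer $e$ with $\bar d(B)>3/e$, works with intervals $[n,en)$, and proves by a direct counting argument that infinitely many $n$ satisfy $|B\cap[n,en)|\ge n$. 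Your dyadic decomposition has the small advantage that the blocks $B_i$ are automatically pairwise disjoint, so $f$ is well-defined without extra care; the paper has to select the $n_i$ so that the intervals $I_i=[n_i,en_i]$ are disjoint. Conversely, the paper's intervals make the bound $f(x)\ge x/e$ immediate (every $x\in I_i$ has $x\le en_i$ and $f(x)\ge n_i$), whereas you need the slightly less clean constant $\beta/8$ plus a remark about adjusting finitely many small $n$; thinning the $k_i$ at the outset, as you note, cleans this up. Both versions are sound.
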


\begin{proof}
Fix a positive natural number $e$ such that $\bar d(B)>\frac{3}{e}$.  We first claim that 
there are infinitely many $n\in\NN$ with $|B\cap [n, e  n)| \ge n $. 
So let $m\in\NN$ be given; we will find $n\ge  \lfloor\frac{m}{e} \rfloor$ with this property. 

Since $B$ has positive upper density, we can increase $m$ such that $|B\cap [0,m)| \ge \frac{3}{e} m$, and 
also such that $m > 2e^2$.  Now 
let $n:= \lfloor \frac me \rfloor$; then $ m = en+ O(e)$, and $e < n$.
So we have 
$$ | B\cap [n, en) | \ge | B\cap [ 0,  m) | -  |[0,n)| -  | [en,m)| \ge 
\frac{3}{e} m - n - O( e)  \geq \frac{3}{e} en -2n  = n .$$

Now we choose an infinite strictly increasing sequence $(n_i)_{i\geq 1}$ of natural numbers such that the intervals $I_i:= [n_i, en_i]$
are disjoint, and all $n_i$ have the above property.   We  can then find a
function $f\in \Oo$ with the following properties: $f(x) \ge \frac{x}{e}$ for all $x\in\NN$ (hence
$f\in \CC$), and  $f[B\cap I_i] \supseteq [n_i, 2n_i)$ for all $i\geq 1$. 
\end{proof}

The next lemma is the crucial step of the proof of Theorem~\ref{thm:precomplete}.

\begin{lem} \label{ab.lemma}
Let $(n_i)_{i\geq 1}$ be a strictly increasing sequence of natural numbers, and set 
$C:= \bigcup_{i\ge 1} [n_i, 2n_i)$.   Then there
exists a set $D\subseteq \NN$ of upper density~$0$ and  a binary function $h\in \CC$
such that $h[ C\times D ] = \mathbb N $. 
\end{lem}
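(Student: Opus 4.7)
My plan is to construct $D$ and $h$ explicitly via a designated pairing. First, pass to a subsequence $(m_j)_{j\ge 1}$ of $(n_i)$ with $m_{j+1}>2m_j$, so that the intervals $T_j:=[m_j,2m_j)\subseteq C$ are pairwise disjoint. Set $v_j:=\sum_{\ell\le j} m_\ell$ and partition $\NN=\bigsqcup_{j\ge1}V_j$ with $V_j:=[v_{j-1},v_j)$ of size $m_j$; for $n\in V_j$ let $c_n:=m_j+(n-v_{j-1})\in T_j$, giving a bijection $n\mapsto c_n$ of $\NN$ onto $\bigsqcup_j T_j\subseteq C$.

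Next, for each $n\in V_j$ I would choose a ``flag'' $d_n\in\NN\setminus C$ so that the resulting set $D:=\{d_n:n\in\NN\}$ has upper density zero; for concreteness the $d_n$'s can be placed inside the gaps between consecutive $T_j$'s, using a very sparse sub-pattern so that $|D\cap[0,M)|=o(M)$ and so that $n\mapsto d_n$ is injective. Define $h:\NN^2\to\NN$ by $h(c_n,d_n):=n$ and $h(c,d):=0$ otherwise. By construction $h[C\times D]=\NN$.

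To verify $h\in\CC$, let $A\subseteq\NN$ with $\bar d(A)=0$. Then
$$h[A\times A]\subseteq\{0\}\cup\{n\in\NN:c_n\in A\text{ and }d_n\in A\}.$$
Because $c_n\in C$ while $d_n\notin C$, the conjunction forces $A$ to contain elements both inside and outside $C$. Split $[0,N)$ into the complete blocks $V_j$ for $j<J$ plus the partial block $V_J\cap[0,N)$, where $v_{J-1}\le N<v_J$. Since $c$ restricted to $V_j$ is a bijection onto $T_j$, the $c$-side bound reads
$$|\{n<N:c_n\in A\}|\;\le\;\sum_{j\le J}|A\cap T_j|\;\le\;|A\cap[0,2m_J)|,$$
and the analogous $d$-side bound uses the sparse placement of $D$ to control $|\{n<N:d_n\in A\}|$. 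Taking the minimum of the two bounds for the conjunction and invoking $v_J\ge m_J$ together with $\bar d(A)=0$ should yield $|h[A\times A]\cap[0,N)|/N\to 0$.

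The main obstacle is the partial-block contribution from block $J$ when $(m_j)$ grows super-geometrically: then $m_J\gg v_{J-1}$ can cause the $c$-side bound alone to scale like $\bar d(A\cap[0,2m_J))\cdot m_J/v_{J-1}$, which need not tend to zero for $N$ close to $v_{J-1}$. The resolution requires the $d$-side to compensate: the placement of the flags must be such that for any sparse $A$, the simultaneous conditions $c_n\in A$ and $d_n\in A$ impose enough ``spread'' on $A$ between the region $C$ and its complement, forcing the count to remain $o(N)$ uniformly. Calibrating the density-zero distribution of the $d_n$'s to give exactly this saving is the delicate combinatorial step at the heart of the lemma.
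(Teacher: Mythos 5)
Your construction ultimately pairs each $n$ with a single point $(c_n,d_n)\in C\times D$ via an injection, defines $h(c_n,d_n)=n$ and $h=0$ elsewhere, and then tries to bound $|h[A\times A]\cap[0,N)|$ by the count of $n<N$ satisfying the conjunction ``$c_n\in A$ and $d_n\in A$.'' This is the wrong shape of argument, and the obstacle you honestly flag at the end is a genuine gap, not a detail to be calibrated away. With an injective pairing, the conjunction gives you at best $\min\bigl(|\{n<N: c_n\in A\}|,\ |\{n<N: d_n\in A\}|\bigr)$, and neither side is $o(N)$ uniformly: when $m_J\gg v_{J-1}$ the map $n\mapsto c_n$ dilates the initial segment of $V_J$ so violently that a set $A$ of upper density $0$ concentrated near $m_J$ pulls back to a dense subset of $[0,N)$ for $N$ just past $v_{J-1}$, and the $d$-side cannot rescue this because $d\restriction V_j$ is likewise a bijection onto a set of the same size $m_j$, so it offers no independent saving. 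The min of two quantities that can each be of order $N$ is still of order $N$.

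What is missing is the \emph{multiplicative} structure that a product $C\times D$ actually provides. The paper's proof does not pair points one-to-one; for each dyadic block $[2^k,2^{k+1})$ it chooses an entire \emph{rectangle} $R_k=[n_{i(k)},2n_{i(k)})\times D_k\subseteq C\times D$ whose two side lengths are deliberately of different orders ($n_{i(k)}$ and $d_k\approx 2^k/n_{i(k)}$, so their product is $\approx 2^k$), and maps a subset of $R_k$ bijectively onto $[2^k,2^{k+1})$. Then $|(A\times A)\cap R_k|$ is bounded by the \emph{product} $|A\cap[n_{i(k)},2n_{i(k)})|\cdot|A\cap D_k|$; the first factor is $\le\varepsilon\,n_{i(k)}$ for large $k$ (using only the density of $A$), the second is trivially $\le d_k$, and the product is $\le\varepsilon\cdot 2^k$, i.e.\ an $\varepsilon$-fraction of the block. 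This rectangle decomposition is precisely what lets one side of the rectangle absorb the density saving while the other is controlled merely by its cardinality, and Lemma~\ref{easy.2} (checking density along dyadic blocks) then closes the argument. Your injective pairing collapses the two dimensions into one and loses exactly this leverage; to repair the proof you would essentially have to abandon the pairing and rebuild it around rectangles, which is the paper's construction.
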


\begin{proof}   
For notational simplicity we will aim for a function $h$ such that
$h[ C\times D ] \supseteq\NN\setminus\{1\}$.  It is then easy to modify $h$ to obtain $h[ C\times D ] = \mathbb N $.

\begin{itemize}
\item Set  $n_0:= 2$; then for each natural number $k \ge 1$, the natural number $i(k) := \max \{ i\geq 0:  n_i \le 2^k \}$ is well-defined.  Clearly the sequence
 $(i(k))_{k\geq 1}$ is weakly increasing and diverges to infinity. 
\item For all $k\geq 1$, let $d_k:= \lceil \frac {2^k }{n_{i(k)}} \rceil  $.  Then 
 $d_k = \frac {2^k }{n_{i(k)}} + O(1) $, and $ 1 \le d_k \le 2^{k-1}$.
 \item For all $k\geq 1$, let $D_k$ be the interval $(2^k-d_k , 2^k]$.   Note that these
 intervals are disjoint,
   and that $|D_k|=d_k $. 
\item Let $D := \bigcup_{k\geq 1}  D_k $. 
\end{itemize}

To check that  $D$ has density~$0$ we use Lemma~\ref{easy.2}:  clearly 
$\frac{1}{2^{k}}|D\cap [ 2^{k}, 2^{k+1})|  = \frac{1}{2^k}  d_{k+1}  = \frac{2}{n_{i(k+1)}} + O(\frac1{2^{k}})  \to 0$
for $k\to \infty$.

For all $k\geq 1$, let $R_k:= [n_{i(k)}, 2n_{i(k)})  \times  D_k$.
The cardinality of $R_k$ is $n_{i(k)}  \cdot d_k  = 2^ k + O(n_{i(k)})$,
so there exists a bijection $g_k:S_k\to [2^k, 2^{k+1})$ between a subset $S_k$ of $R_k$ and $[2^k, 2^{k+1})$.

Even though the sets $[n_{i(k)}, 2 n_{i(k)})$ are not necessarily disjoint as the $n_{i(k)}$ might not be strictly increasing, the sets $D_k$ and therefore also 
the sets $R_k$ are disjoint.   Hence we may define a function $h\in\Ot$ by setting
$h(x,y) := g_k(x,y)$ whenever $(x,y)\in S_k$ for some $k\geq 1$,
and $h(x,y):=0$ otherwise.  Then $h$ maps
$C\times D$ onto $ \{0\}\cup\bigcup_{k\ge 1 } [ 2^k, 2^{k+1}) = \NN \setminus \{1\}$.
 It remains to check that $h\in\CC$. 

So let $T\subseteq \NN$ be of upper density~$0$, and let $\varepsilon>0$ be a rational number. Note that  
the set~$h[T\times T]$ is the union of the sets~$h[(T\times T)\cap R_k] \subseteq [2^k,2^{k+1})\cup\{0\}$. 
For large enough~$k$ we have
\begin{itemize}
\item on the first coordinate: $|T\cap  [n_{i(k)}, 2n_{i(k)})| \le  n_{i(k)}\varepsilon$;
\item on the second coordinate: $|T\cap   D_k |   \le d_k  =
  \frac{2^k}{n_{i(k)}} + O(1)$.  
\end{itemize}
Hence the cardinality of  $(T\times T) \cap R_k$ 
is bounded by  $$  
n_{i(k)} \cdot \varepsilon \cdot \biggl( \frac{2^k}{n_{i(k)}} + O(1) \biggr) 
 = 2^k \cdot \varepsilon 
\cdot (  1  + O (\frac{n_{i(k)}}{2^k})) \le 2^k \cdot \varepsilon \cdot 2.$$ 
So  $|h[T\times T]\cap [2 ^k, 2 ^{k+1})|\le 2\varepsilon \cdot  2^k$.  As $\varepsilon$ was arbitrary, we can now apply 
 Lemma~\ref{easy.2} to infer that 
$h[T\times T]$ has upper density~$0$.
\end{proof}

The next lemma is again known from \cite{BGHP}, but we give the short proof for the convenience of the reader.
\begin{lem} \label{known.2}
Let $I$ be any ideal on $\NN$, and let $\C_I$ the clone of functions preserving $I$.  If $t\in\Ok$ is a function for which $t [Z^k] = \NN$ for some 
$Z\in I$, then $\langle \C_I\cup \{ t\}\rangle = \OO$.
\end{lem}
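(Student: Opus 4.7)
The plan is to prove the nontrivial containment $\O \subseteq \langle \C_I \cup \{t\}\rangle$; the reverse inclusion holds by definition of a generated clone. The key input is the hypothesis $t[Z^k]=\NN$ with $Z\in I$, which says that every natural number has at least one $t$-preimage inside $Z^k$. Using this, I would first fix, for each $n\in\NN$, a tuple $(s_1(n),\ldots,s_k(n))\in Z^k$ with $t(s_1(n),\ldots,s_k(n))=n$, obtaining auxiliary functions $s_1,\ldots,s_k\colon \NN\to Z$.

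Now let $f\in\O$ be arbitrary, of arity $m$. I would define the $m$-ary functions
\[
g_i(x_1,\ldots,x_m) \;:=\; s_i(f(x_1,\ldots,x_m)), \qquad 1\le i\le k.
\]
The range of each $g_i$ is contained in $Z$, so $g_i[A^m]\subseteq Z\in I$ for every $A\in I$, and downward closure of $I$ yields $g_i\in\C_I$. The identity
\[
t(g_1(x_1,\ldots,x_m),\ldots,g_k(x_1,\ldots,x_m)) \;=\; f(x_1,\ldots,x_m)
\]
then exhibits $f$ as a one-step clone composition of $t$ with functions from $\C_I$, placing $f$ in $\langle\C_I\cup\{t\}\rangle$. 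As $f$ was arbitrary, the lemma follows.

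There is really no obstacle: the whole argument rests on the trivial observation that any function whose image is contained in a single set of $I$ automatically belongs to $\C_I$. The only mildly delicate point is the implicit appeal to choice in selecting the values $(s_1(n),\ldots,s_k(n))$, but this is harmless because the $s_i$ themselves are purely set-theoretic devices that never need to belong to any clone; only the compositions $g_i=s_i\circ f$ are required to lie in $\C_I$, and these do by the range-in-$Z$ argument.
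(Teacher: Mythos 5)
Your proof is correct and is essentially the paper's own argument: the functions $s_1,\ldots,s_k$ are exactly the paper's right inverse $r=(r_1,\ldots,r_k)$ of $t$, and writing $f = t(g_1,\ldots,g_k)$ with $g_i = s_i\circ f$ is the paper's decomposition $h = t\circ(r\circ h)$, with membership of each $g_i$ in $\C_I$ justified the same way (range contained in $Z\in I$, downward closure).
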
 
\begin{proof}  Let  $ r = (r_1,\ldots, r_k): \mathbb N\to Z^k$  be a
right inverse of $t$, i.e., $t\circ  r $ is the identity map on $\mathbb N$.  Any function $h:\NN^m\to \NN$ can be written as $h =  t  \circ  ( r  \circ  h )$, 
where each of the functions  $ r_i\circ h: \NN^m\to \NN$ is in $\C_I$, as its range is
a subset of~$Z$.  Hence $h\in \langle \C_I \cup \{t\}\rangle$. 
\end{proof}

\begin{proof}[Proof of Theorem~\ref{thm:precomplete}]
Let $g\notin \CC$.  By Lemma~\ref{known.1} we may assume that 
$g$ is unary.  So there is  a  set $A\subseteq \NN$ of upper density~$0$ such that 
$B:= g[A]$ has positive upper density.   Using Lemma~\ref{n2n}
we  find  a set $C$ of the form $\bigcup_{i\geq 1} [n_i, 2n_i)$, for a strictly increasing sequence $(n_i)_{i\geq 1}$ of natural numbers, and 
a function  $f\in \CC$ such that $f[B] \supseteq C$.  From
Lemma~\ref{ab.lemma}
we get a set $D\subseteq\NN$ of upper density~$0$ and a
 function $h\in \CC$ mapping $C\times D$ onto $\NN$:
$$\begin{array}{ccccccc}
    A & \pfeil{g} &   B&  \pfeil{f}&  C
\\
     &          &     &            & C\times D & \pfeil{h} & \mathbb N 
\end{array}
$$

The assignment $(a,d)\mapsto 
(f(g(a)),d)$ maps $A\times D$ onto  $C\times D$, so 
the binary function $t\in\Ot$ defined
by 
$$  t(x,y) =   h( f(g(x)), y) $$
maps $A\times D$ onto $\NN$.  Clearly $t\in \langle \CC \cup \{ g \}\rangle$.  
Quoting Lemma~\ref{known.2} for $Z:= A\cup D$ now finishes the proof.
\end{proof}


\begin{thebibliography}{1}

\bibitem{BGHP}
Mathias Beiglb\"{o}ck, Martin Goldstern, Lutz Heindorf, and Michael Pinsker.
\newblock Clones from ideals.
\newblock {\em International Journal of Algebra and Computation},
  19(3):397--421, 2009.

\bibitem{BodChenPinsker}
Manuel Bodirsky, Hubie Chen, and Michael Pinsker.
\newblock The reducts of equality up to primitive positive interdefinability.
\newblock {\em Journal of Symbolic Logic}, 75(4):1249--1292, 2010.

\bibitem{BP-reductsRamsey}
Manuel Bodirsky and Michael Pinsker.
\newblock Reducts of {R}amsey structures.
\newblock In {\em Model Theoretic Methods in Finite Combinatorics}, volume 558
  of {\em Contemporary Mathematics}. American Mathematical Society, 2011.
\newblock 31 pages. In print; preprint available at arxiv.org/abs/1105.6073.

\bibitem{BodPin-Schaefer-STOC}
Manuel Bodirsky and Michael Pinsker.
\newblock Schaefer's theorem for graphs.
\newblock In {\em Proceedings of STOC}, pages 655--664, 2011.
\newblock Preprint of the long version available at arxiv.org/abs/1011.2894.

\bibitem{CH01}
G\'{a}bor Cz\'{e}dli and Lutz Heindorf.
\newblock A class of clones on countable sets arising from ideals.
\newblock {\em Studia Scientiarum Mathematicarum Hungarica}, 37:419--427, 2001.

\bibitem{Goldstern-analytic}
Martin Goldstern.
\newblock Analytic clones.
\newblock Preprint available from arxiv.org/abs/math/0404214, 2004.

\bibitem{GoldsternPinsker}
Martin Goldstern and Michael Pinsker.
\newblock A survey of clones on infinite sets.
\newblock {\em Algebra Universalis}, 59:365--403, 2008.

\bibitem{Kechris}
Alexander Kechris.
\newblock {\em Classical descriptive set theory}, volume 156 of {\em Graduate
  Texts in Mathematics}.
\newblock Springer, 1995.

\bibitem{Szendrei}
\'Agnes Szendrei.
\newblock {\em Clones in universal algebra}.
\newblock S\'eminaire de Math\'ematiques Sup\'erieures. Les Presses de
  L'Universit\'e de {M}ontr\'eal, 1986.

\end{thebibliography}

\end{document}